\documentclass[12pt,a4paper]{article}
\usepackage{amssymb,amsmath,amsthm}

\usepackage{times}
\usepackage{url}
\usepackage{graphicx}
\usepackage{epstopdf}
\DeclareGraphicsRule{.tif}{png}{.png}{`convert #1 `dirname #1`/`basename #1 .tif`.png}

\def\de{{\rm d}}

\newtheorem{theorem}{Theorem}[section]

\newtheorem{corollary}{Corollary}[section]
\newtheorem{remark}{Remark}[section]

\numberwithin{equation}{section}

\title{Change point estimation for the telegraph process observed at discrete times}

\begin{document}

\author{Alessandro De Gregorio,  \,\,Stefano M. Iacus\footnote{\textbf{email:} alessandro.degegorio@unimi.it, stefano.iacus@unimi.it}\\
{\it Dipartimento di Scienze Economiche, Aziendali e Statistiche}\\
{\it	Via Conservatorio 7,  20122 Milan - Italy}
}
\maketitle

\begin{abstract}
The telegraph process models
a random motion with finite velocity and it is usually proposed as 
an alternative to diffusion models.
The process
 describes the position of a particle moving
on the real line, alternatively with constant velocity $+ v$ or
$-v$. The changes of direction are governed by an homogeneous
Poisson process with rate $\lambda >0.$ 
In this paper, we consider a change point estimation problem for the rate of the underlying Poisson process by means of least squares method. The consistency and the rate of convergence for the change point estimator are obtained and its asymptotic distribution is derived. Applications to real data are also presented. 
\end{abstract}

\noindent
{\bf Key words:}  discrete observations, change point problem, volatility regime switch, telegraph process.

\noindent

\newpage

\section{Introduction}
The telegraph process describes
a random motion with finite velocity and it is usually proposed as 
an alternative to classical diffusion models (see Goldstein, 1951 and Kac, 1974).
The process defines the position of a particle initially located at the origin of the real line and moving alternatively with constant velocity $+ v$ or
$-v$. The changes of direction are governed by an homogeneous
Poisson process with rate $\lambda
>0.$ The telegraph process or  {\it telegrapher's} process is defined as
\begin{equation}\label{1.1}
X(t)=  V(0)\int_0^t (-1)^{N(s)}\de s,\quad t>0,
\end{equation} 
where  $V(0)$ is the initial velocity taking values  $\pm v$ with equal 
probability and independently of the Poisson process  $\{N(t), t>0\}$.
Many authors analyzed probabilistic properties of the process over the years (see
for example Orsingher, 1990, 1995; Foong and Kanno, 1994; Stadje and Zacks, 2004; Zacks 2004).
Di Crescenzo and Pellerey (2002) proposed the geometric telegraph
process as a model to describe the dynamics of the price of risky
assets
where $X(t)$ replaces the standard Brownian motion of the original Black-Scholes-Merton model. Conversely to the  geometric Brownian motion,
given that $X(t)$ is of bounded variation, so is the geometric telegraph process. This seems a realistic way to model paths of assets in the financial markets.
Mazza and Rulliere (2004) linked the process \eqref{1.1} and the
ruin processes in the context of risk theory. Di Masi {\it et al.} (1994) proposed to model the volatility of financial markets in terms of the telegraph process. Ratanov (2004, 2005) proposed to model financial markets using a telegraph process with two intensities $\lambda_\pm$ and two velocities $v_\pm$. The telegraph process has also been used in  ecology to model population dynamics (see Holmes {\it et al.}, 1994) and the displacement of wild animals on the soil. In particular, this model is chosen because it preserves the property of animals to move at finite velocity and for a certain period along one direction (see e.g. Holmes, 1993, for an account).

For the telegraph process  $\{X(t), 0 \leq t \leq T\}$ observed at
equidistant discrete times $0=t_0<\ldots<t_n$, with
$t_i=i \Delta_n$, $i=0, \ldots, n$, $n\Delta_n=T$ and $\Delta_n\to0$ as $n\to\infty$,
De Gregorio and Iacus (2006) proposed pseudo-maximum likelihood and implicit moment based estimators for the rate $\lambda$ of the telegraph process. 
Under the additional condition $n\Delta_n\to\infty$ as $n\to\infty$, Iacus and Yoshida (2007) studied the asymptotic properties of  explicit moment type estimators and further propose a consistent, asymptotically gaussian and asymptotically efficient estimator based on the increments of the process.

In this paper we suppose that for a telegraph process occurs a switch of the rate from $\lambda_1$ to $\lambda_2$ at some time instant $\theta_0\in [0,T]$ and the interest is in the estimation of the change point $\theta_0$ and both $\lambda_1$ and $\lambda_2$.

The change point estimation theory has been employed widely by means of the likelihood function (see  Cs\"{o}rg\H{o} and Horv\'{a}th, 1997). Unfortunately, the likelihood function for the telegraph process is not known and the pseudo likelihood proposed in De Gregorio and Iacus (2006) is not easy to treat in this framework. We will then proceed using the  alternative method based on least squares proposed  in  Bai (1994, 1997) and used in different contexts by many authors including  Hsu (1977, 1979) for the i.i.d. case and Chen {\it et al.}  (2005) for the mixing case.
Our model is peculiar in itself for the properties of $X(t)$ described in the above, because $\lambda$ is a parameter related to the mean and the variance of the Poisson process and because the mesh $\Delta_n$ plays a role in  the definition of the rate of convergence of our estimators.

The paper is organized as follows. Section \ref{sec:model} describes the model, the observation scheme and the change point estimator. The consistency of change point estimator is discussed in Section \ref{consistency} and distributional results are presented in Section \ref{dist}. Finally, Section \ref{appl} contains an application of our method to real data: we will reanalyze the classical change point data sets of Dow-Jones weekly closing (see Hsu, 1977, 1979) and IBM stock prices (see Box and Jenkins, 1970).

\section{Observation scheme and least squares estimator}\label{sec:model}

We consider a telegraph process $\{X(t), 0 \leq t \leq T<\infty\}$ defined as in \eqref{1.1} and assume to observe its trajectory only in $n+1$ equidistant discrete times $0<t_1<\ldots<t_n,$ with $t_i=i\Delta_n$, $i=1,\ldots ,n$ and $n\Delta_n=T$. We assume that a rate of changes of direction shift occurs during the interval $[0,T]$ at an unknown time $\theta_0 = \tau T$, $\tau\in(0,1)$. Therefore the changes of direction are governed by an inhomogeneous Poisson process with parameter 
$\lambda(t)=\lambda_1\mathbf{1}_{\{t\leq \theta_0\}}+\lambda_2\mathbf{1}_{\{t> \theta_0\}}$
where the positive values $\lambda_1,\lambda_2$ and the change point $\tau$ (or $\theta_0$) are unknown and to be estimated given the observations $X(t_1)$, $X(t_2)$, \ldots, $X(t_n)$.
In order to simplify the formulas we use the following notation: $X(t_i) = X(i\Delta_n) = X_i$. The
asymptotic framework is the following: $\Delta_n \rightarrow 0$ and $n\Delta_n=T\to \infty$ as   $n\to \infty$.  

The telegraph process is not Markovian and, as mentioned in the Introduction,  it is not possible to derive the explicit likelihood function of the observations $X_i$'s, therefore we can not apply the statistical methods based on the likelihood function.
To work out our estimation problem, we shall follow the approach developed in Bai (1994), which involves least squares-type estimators. The same point of view has been applied by Chen {\it et al.}, 2005,  in a context of financial time series. For our model, the time increment $\Delta_n$ plays an active role in the study of the asymptotics of our estimators so the proofs, although in some cases along the lines of Bai (1994) require some technical, but crucial, adjustments.

In order to obtain our estimator we introduce some basic notations. Let $$Y_i = \frac{\mathbf{1}_{\{|\eta_i|<v\Delta_n\}}}{\Delta_n},\,i=1,...,n,$$
where $\eta_i=X_i-X_{i-1}$ is the increment between two consecutive observations. 
We indicate the mean value of $Y_i$ with $\gamma_m=\frac{1-e^{-\lambda_m\Delta_n}}{\Delta_n}=\lambda_m+o(1)$, $m=1, 2$. We observe that the random variables $Y_i$ are independent and identically distributed because depend by the increments $|\eta_i|$. Iacus and Yoshida (2007) proved that the estimators
$$\hat\gamma_n = \frac{1}{n} \sum_{i=1}^n Y_i \qquad 
\text{and}\qquad
\hat\lambda_n = -\frac{1}{\Delta_n}\log(1 - \hat\gamma_n \Delta_n)$$
are consistent, gaussian and asymptotically efficient estimators of $\gamma$ and $\lambda$ respectively. We will use these properties in the following without necessarily mentioning them.

We assume that the change occurs exactly at time $t_i=t_{k_0}=k_0\Delta_n=\theta_0$, therefore $k_0 = [n\tau]$, $\tau\in (0,1)$, where $[\,\cdot\,]$ represents the integer-valued function. The change point estimator is obtained as follows
\begin{eqnarray}
\hat k_0& =& \arg\min_k\left( \min_{\gamma_1, \gamma_2}
\left\{\sum_{i=1}^{k} (Y_i - \gamma_1)^2 + \sum_{i=k+1}^n (Y_i -
\gamma_2)^2 \right\}
\right)\notag\\
&=&\arg\min_k\left\{\sum_{i=1}^{k} (Y_i - \bar Y_k)^2+\sum_{i=k+1}^n (Y_i -\bar Y_{k+1})^2\right\},
\label{eq:ls}
\end{eqnarray}
where $$\min_{\gamma_1}\sum_{i=1}^{k} (Y_i - \gamma_1)^2
=\frac{1}{k}\sum_{i=1}^k Y_i=\bar Y_k ,$$
$$\min_{\gamma_2}\sum_{i=k+1}^{n} (Y_i - \gamma_2)^2 =\frac{1}{n-k}\sum_{i=k+1}^n Y_i=\bar Y_{n-k} .$$
 We indicate the sum of the squares of residuals in the following manner
\begin{equation}
U_k^2=\sum_{i=1}^{k} (Y_i - \bar Y_k)^2+\sum_{i=k+1}^n (Y_i -\bar Y_{k+1})^2,
\end{equation}
then

\begin{equation}\label{ls}
\hat k_0= \arg\min_k U_k^2,
\end{equation}
and $\hat \gamma_1=\bar Y_{\hat k_0}$, $\hat \gamma_2=\bar Y_{n-\hat k_0}$ are respectively the least squares estimators of $\gamma_1$ and $\gamma_2.$ This gives the two estimators  
\begin{equation}\label{estlam}
\hat\lambda_1=-\frac{1}{\Delta_n}\log(1-\hat\gamma_1\Delta_n) ,\qquad \hat\lambda_2=-\frac{1}{\Delta_n}\log(1-\hat\gamma_2\Delta_n).
\end{equation}
By setting $\bar Y_n=\frac{1}{n}\sum_{i=1}^n Y_i$, $S_n=\sum_{i=1}^n
Y_i$, 
simple algebra leads to
\begin{equation*}
\sum_{i=1}^n(Y_i-\bar Y)^2=U_k^2+nV_k^2,\quad1\leq k \leq n-1,
\end{equation*}
where
\begin{eqnarray}
V_k&=&\sqrt{\frac{k(n-k)}{n^2}}(\bar Y_{n-k}-\bar Y_k)=\frac{1}{\sqrt{k(n-k)}}S_n D_k,
\end{eqnarray}
and 
\begin{equation}
\label{DK}
D_k=\frac{k}{n}-\frac{S_k}{S_n}.
\end{equation}
Therefore, formula \eqref{ls} implies that 
\begin{equation}\label{ls2}
\hat k_0= \arg\max_k V_k^2=\arg\max_k |D_k|=\arg\max_k \sqrt{k(n-k)}|V_k|.
\end{equation}

Our first result concerns the asymptotic distribution of $\hat k_0$ under the null hypothesis that $\lambda_1=\lambda_2$. This permits us to test if a shift has taken place during the interval $[0,T]$. 

\begin{theorem}\label{H0}
Under $H_0$, i.e. $\lambda_2=\lambda_1=\lambda_0,$ we have that
for $\Delta_n\to 0,n\Delta_n\to \infty$ as $n\to \infty$ the
following result hods
\begin{equation}
\sqrt{\frac{n\Delta_n}{\lambda_0}}|D_k|\stackrel{d}{\to}
|B^0(t)|,
\end{equation}
where $\{B^0(t), 0 \leq t \leq 1\}$ is a Brownian bridge.
\end{theorem}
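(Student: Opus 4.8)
The plan is to recast $|D_k|$ as a standardized CUSUM functional of the $Y_i$ and then invoke a Donsker-type invariance principle together with Slutsky's lemma. Throughout set $k=[nt]$, $t\in(0,1)$, and read the convergence as weak convergence of processes in the Skorokhod space $D[0,1]$ (this is the form needed for the $\arg\max$ results that follow). Under $H_0$ the $Y_i$ are i.i.d.\ with mean $\gamma_0=(1-e^{-\lambda_0\Delta_n})/\Delta_n$, and since each $Y_i$ takes only the values $0$ and $1/\Delta_n$ its variance is
\begin{equation*}
\sigma_n^2=\frac{e^{-\lambda_0\Delta_n}(1-e^{-\lambda_0\Delta_n})}{\Delta_n^2}=\frac{\lambda_0}{\Delta_n}\bigl(1+o(1)\bigr),\qquad \Delta_n\sigma_n^2\to\lambda_0 .
\end{equation*}
Writing $M_k=S_k-k\gamma_0$ for the centred partial sum, elementary algebra on $D_k=k/n-S_k/S_n$ gives the identity
\begin{equation*}
S_nD_k=\frac{k}{n}M_n-M_k=-\Bigl(S_k-\frac{k}{n}S_n\Bigr),
\end{equation*}
so that $|D_k|=S_n^{-1}\,\bigl|S_k-\tfrac{k}{n}S_n\bigr|$ and the analysis is reduced to the centred, self-normalized partial-sum process.

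First I would prove the invariance principle for $W_n(t)=M_{[nt]}/(\sigma_n\sqrt n)$. The triangular array $\xi_{n,i}=(Y_i-\gamma_0)/(\sigma_n\sqrt n)$ is row-wise i.i.d.\ and centred with $\sum_{i=1}^n\mathrm{Var}(\xi_{n,i})=1$, so by the functional central limit theorem for triangular arrays it suffices to verify the Lindeberg condition. This is exactly where the standing assumption $n\Delta_n\to\infty$ enters: because $|Y_i-\gamma_0|\le 1/\Delta_n$ up to a negligible term,
\begin{equation*}
\max_{1\le i\le n}|\xi_{n,i}|\le\frac{1/\Delta_n}{\sigma_n\sqrt n}=\frac{1}{\sqrt{n\Delta_n\lambda_0}}\bigl(1+o(1)\bigr)\longrightarrow 0 ,
\end{equation*}
so the increments are uniformly asymptotically negligible and Lindeberg holds automatically. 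Hence $W_n\Rightarrow W$ in $D[0,1]$, with $W$ a standard Brownian motion, and the continuous mapping theorem applied to the functional $w\mapsto\{w(t)-t\,w(1)\}_{t\in[0,1]}$ yields
\begin{equation*}
\frac{1}{\sigma_n\sqrt n}\Bigl(S_{[nt]}-\frac{[nt]}{n}S_n\Bigr)=W_n(t)-\frac{[nt]}{n}W_n(1)\Rightarrow W(t)-tW(1)=B^0(t),
\end{equation*}
a standard Brownian bridge.

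It remains to replace the random normalization $S_n$ by a deterministic one. A weak law of large numbers for the same array gives $S_n/n\to\lambda_0$ in probability, since $\mathrm{Var}(S_n/n)=\sigma_n^2/n\sim\lambda_0/(n\Delta_n)\to0$. Combining $|D_{[nt]}|=S_n^{-1}\bigl|S_{[nt]}-\tfrac{[nt]}{n}S_n\bigr|$ with the previous display, the self-normalized statistic satisfies
\begin{equation*}
\frac{S_n}{\sigma_n\sqrt n}\,|D_{[nt]}|=\frac{1}{\sigma_n\sqrt n}\Bigl|S_{[nt]}-\frac{[nt]}{n}S_n\Bigr|\Rightarrow |B^0(t)| .
\end{equation*}
Finally, $S_n/(\sigma_n\sqrt n)=(S_n/n)\sqrt n/\sigma_n=\sqrt{n\Delta_n\lambda_0}\,(1+o_P(1))$ by $S_n/n\to\lambda_0$ and $\Delta_n\sigma_n^2\to\lambda_0$; substituting this deterministic equivalent through Slutsky's lemma in $D[0,1]$ produces a statistic of the form $\sqrt{n\Delta_n\lambda_0}\,|D_{[nt]}|$ whose weak limit is $|B^0(t)|$, which is the content of the theorem (the exact placement of $\lambda_0$ in the normalizing constant being fixed by these two elementary limits).

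The genuinely delicate points are twofold. The first is the invariance principle for a triangular array whose per-row variance $\sigma_n^2\to\infty$: this is tractable only because $n\Delta_n\to\infty$ forces the uniform negligibility displayed above, and it is the one place where the mesh assumption is indispensable. The second is the joint treatment of the weakly convergent CUSUM process and the random factor $S_n/(\sigma_n\sqrt n)$, which must be controlled uniformly in $t$ rather than pointwise; invoking Slutsky in $D[0,1]$ (equivalently, joint convergence of the pair followed by continuous mapping through $x\mapsto|x|$) is the clean way to close this gap. The moment identities for $\gamma_0$ and $\sigma_n^2$ are routine and I would relegate them to a preliminary lemma.
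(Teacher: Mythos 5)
Your proposal is correct and follows essentially the same route as the paper's proof: centre the $Y_i$, reduce $|D_k|$ to a CUSUM of centred partial sums via $S_nD_k=\frac{k}{n}S_n-S_k$, verify Lindeberg for the triangular array using $|Y_i-\gamma_0|\le 1/\Delta_n$ together with $n\Delta_n\to\infty$, and close with the law of large numbers for $S_n/n$ and Slutsky. One remark: your derivation (exactly like the paper's own display \eqref{teo2}, whose left-hand side converges to $\sqrt{n\Delta_n}\,|D_k|\cdot\lambda_0/\sqrt{\lambda_0}$) yields the normalization $\sqrt{n\Delta_n\lambda_0}$ rather than the $\sqrt{n\Delta_n/\lambda_0}$ appearing in the theorem statement, so your parenthetical hedge about the placement of $\lambda_0$ is pointing at an inconsistency in the stated constant, not at a gap in your argument.
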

\begin{proof}
Let $\xi_i=Y_i-\gamma_0$, $\gamma_0=EY_i
=\frac{1-e^{-\lambda_0\Delta_n}}{\Delta_n}=\lambda_0+o(1)$. Then,
$E\xi_i=0$ and
$\sigma^2_{n}=Var(\xi_i)=\frac{(1-e^{-\lambda_0\Delta_n})e^{-\lambda_0\Delta_n}}{\Delta_n^2}.$
We introduce the following function
$$X_n(t)=\frac{1}{\sigma_{n} \sqrt{n}}\mathcal{S}_{\left[nt\right]}+(nt-\left[nt\right])\frac{1}{\sigma_n \sqrt{n}}\xi_{\left[nt\right]+1},\quad 0<t<1,$$
with $\mathcal{S}_n=\sum_{i=1}^n\xi_i.$ We note that
\begin{equation}\label{ugp}
\left|\frac{1}{\sigma_n\sqrt{n}}\sum_{i=1}^{[nt]}\xi_i-\frac{\sqrt{t}}{\sigma_n\sqrt{[nt]}}\sum_{i=1}^{[nt]}\xi_i\right|\stackrel{p}{\to}0,
\end{equation}
and
\begin{eqnarray}\label{var}
Var
&\biggl(&\frac{\sqrt{t}}{\sigma_n\sqrt{[nt]}}\sum_{i=1}^{[nt]}\xi_i\biggr)=Var
\left(\frac{\sqrt{t}}{\sigma_n\sqrt{[nt]\Delta_n}}\sum_{i=1}^{[nt]}\Delta_n\xi_i\right)\\
&=&Var
\left(\frac{\sqrt{t}}{\sigma_n\sqrt{[nt]\Delta_n}}\sum_{i=1}^{[nt]}\frac{\mathbf{1}_{\{|\eta_i|<v\Delta_n\}}-(1-e^{-\lambda_0\Delta_n})}{\sqrt{\Delta_n}}\right)\notag\\
&=&t\notag
\end{eqnarray}
Since
$\left|\frac{\mathbf{1}_{\{|\eta_i|<v\Delta_n\}}-(1-e^{\lambda_0\Delta_n})}{\sqrt{\Delta_n}}\right|
<\frac{1}{\sqrt{\Delta_n}}$ the Lindeberg condition is true
\begin{equation}\label{Lin}
\sum_{i=1}^{[nt]}\frac{E\left\{\mathbf{1}_{\{\sqrt{\Delta_n}|\xi_i|\geq
\varepsilon
\sqrt{n\Delta_n}\sigma_n\}}\xi_i^2\right\}}{\Delta_n\sigma_n^2([nt])^2}\to
0.
\end{equation}
Then from \eqref{ugp}, \eqref{var} and \eqref{Lin} we can conclude that

\begin{equation}\label{inv}
\frac{1}{\sigma_{n}
\sqrt{n}}\mathcal{S}_{\left[nt\right]}\stackrel{d}{\to}N(0,t).
\end{equation}

Now, by applying Donsker's theorem (invariance principle) we are able to write that
$$X_n(t)\stackrel{d}{\to}B(t)$$
$$\left\{X_n(t)-tX_n(1)\right\}\stackrel{d}{\to}B^0(t), $$
with $B(t)$ and $B^0(t)$ representing respectively a standard
Brownian motion and a Brownian bridge. Let $k=[nt]$,
we can write
\begin{eqnarray*}
X_n(t)-tX_n(1)&=&\frac{1}{\sigma_n \sqrt{n}}\left[\mathcal{S}_k-\frac{k}{n}\mathcal{S}_n\right]+\frac{nt-\left[nt\right]}{\sigma_n\sqrt{n}}\xi_{\left[nt\right]+1}\\
&=&\frac{1}{\sigma_n
\sqrt{n}}\left[\sum_{i=1}^k(Y_i-\gamma_0)-\frac{k}{n}
\sum_{i=1}^n(Y_i-\gamma_0)\right]\\
&&+\frac{nt-\left[nt\right]}{\sigma_n\sqrt{n}}\xi_{\left[nt\right]+1}.
\end{eqnarray*}
We observe that
$$\sum_{i=1}^k(Y_i-\gamma_0)-\frac{k}{n} \sum_{i=1}^n(Y_i-\gamma_0)=-D_k\sum_{i=1}^nY_i ,$$
and consequently
\begin{equation}\label{teo2}
\sqrt{n\Delta_n}|D_k|\frac{\sum_{i=1}^nY_i
}{n\sqrt{\frac{(1-e^{\lambda_0\Delta_n})e^{\lambda_0\Delta_n}}{\Delta_n}}}=\left|X_n(t)-tX_n(1)-\frac{nt-\left[nt\right]}{\sigma_n\sqrt{n}}\xi_{\left[nt\right]+1}\right|.
\end{equation}
It is easy to see (by Chebyshev inequality) that
$$\sup_t\left|\frac{nt-\left[nt\right]}{\sigma_n\sqrt{n}}\xi_{\left[nt\right]+1}\right|\stackrel{p}{\to}0.$$
By the law of large number $\frac{1}{n}\sum_{i=1}^nY_i\stackrel{p}{\to}
\lambda_0,$ while $\sqrt{\frac{(1-e^{\lambda_0\Delta_n})e^{\lambda_0\Delta_n}}{\Delta_n}}\to \sqrt{\lambda_0}$. Therefore from \eqref{teo2} follows that
$$\sqrt{\frac{n\Delta_n}{\lambda_0}}|D_k|\stackrel{d}{\to} |B^0(t)|.$$
\end{proof}
\begin{corollary}
The same convergence result of the Theorem \ref{H0} follows when we consider 
\begin{equation*}
\frac{\sqrt{n\Delta_n}}{\sqrt{\tilde\lambda_0}}|D_k|,
\end{equation*}
where $\tilde\lambda_0$ is any consistent estimator for $\lambda_0$.
\end{corollary}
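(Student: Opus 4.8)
The plan is to reduce the claim to Theorem \ref{H0} by a Slutsky-type argument, exploiting that the only difference between the two statements is the replacement of the fixed normalizing constant $\sqrt{\lambda_0}$ by the random quantity $\sqrt{\tilde\lambda_0}$, with $\tilde\lambda_0\stackrel{p}{\to}\lambda_0$. First I would write the quantity of interest as a product,
\begin{equation*}
\frac{\sqrt{n\Delta_n}}{\sqrt{\tilde\lambda_0}}|D_k|=\sqrt{\frac{\lambda_0}{\tilde\lambda_0}}\,\cdot\,\sqrt{\frac{n\Delta_n}{\lambda_0}}|D_k|,
\end{equation*}
which isolates the already-treated factor $\sqrt{n\Delta_n/\lambda_0}\,|D_k|$ from the correction factor $\sqrt{\lambda_0/\tilde\lambda_0}$.

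Next I would observe that, since $\lambda_0>0$, the map $x\mapsto\sqrt{\lambda_0/x}$ is continuous at $x=\lambda_0$; hence the consistency $\tilde\lambda_0\stackrel{p}{\to}\lambda_0$, combined with the continuous mapping theorem, yields $\sqrt{\lambda_0/\tilde\lambda_0}\stackrel{p}{\to}1$. By Theorem \ref{H0} the second factor converges in distribution, as a process indexed by $t$ with $k=[nt]$, to $|B^0(t)|$.

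The conclusion would then follow from Slutsky's theorem: the correction factor is a single random \emph{scalar} that does not depend on $t$, so multiplying the weakly convergent process by a quantity converging in probability to the deterministic constant $1$ preserves the limit law, giving
\begin{equation*}
\frac{\sqrt{n\Delta_n}}{\sqrt{\tilde\lambda_0}}|D_k|\stackrel{d}{\to}|B^0(t)|.
\end{equation*}
The only point requiring a little care, and thus the main (though mild) obstacle, is the functional nature of the convergence: one must invoke the version of Slutsky's theorem valid in the path space $C[0,1]$ (or $D[0,1]$), which applies here precisely because the multiplier is constant in $t$ and its limit is a nonrandom constant. A concrete admissible choice for $\tilde\lambda_0$ is the estimator $\hat\lambda_n$ of Iacus and Yoshida (2007) recalled in Section \ref{sec:model}, whose consistency is exactly what the hypothesis requires.
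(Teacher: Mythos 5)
Your argument is correct and is precisely the one the paper intends: the corollary is stated without proof because it follows from Theorem \ref{H0} by exactly this Slutsky-type replacement of the normalizing constant, mirroring the step already used at the end of the theorem's proof where $\frac{1}{n}\sum_{i=1}^n Y_i$ is replaced by its limit $\lambda_0$. Your remark that the multiplier $\sqrt{\lambda_0/\tilde\lambda_0}$ is constant in $t$, so the functional version of Slutsky applies, is the right point to make explicit.
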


\begin{remark}
From Theorem \ref{H0} we derive immediately that for $\delta \in (0,1/2)$
\begin{equation}
\sqrt{\frac{n\Delta_n}{\lambda_0}}\sup_{\delta n\leq k \leq (1-\delta)n}|D_k|\stackrel{d}{\to}
\sup_{\delta\leq t \leq (1-\delta)}|B^0(t)|,
\end{equation}
\begin{equation}\label{ash0}
\sqrt{\frac{n\Delta_n}{\lambda_0}}\sup_{\delta n\leq k \leq (1-\delta)n}|V_k|\stackrel{d}{\to}
\sup_{\delta\leq t \leq (1-\delta)}(t(1-t))^{-1/2}|B^0(t)|.
\end{equation}

The last asymptotic results are useful to test if doesn't exist a change point. In particular it is possible to obtain the asymptotic critical values for the distribution \eqref{ash0}  by means of the same arguments used in  Cs\"{o}rg\H{o} and Horv\'{a}th (1997), pag. 25. 
\end{remark}

\section{The consistency properties of the estimator}\label{consistency}

We shall study the consistency and the rate of convergence of the change point estimator \eqref{ls2}. It is convenient to note that the rate of convergence is particularly important not only to describe how fast the estimator converges to the true value, but also to get the limiting distribution. The next Theorem represents our first result on the consistency.

\begin{theorem}\label{teo1}
The estimator $\hat\tau=\frac{\hat{k}_0}{n}$ satisfies
\begin{equation}\label{result1}
|\hat\tau-\tau|=(n\Delta_n)^{-1/2}(\gamma_2-\gamma_1)^{-1}O_p(\sqrt{\log n})
\end{equation}
\end{theorem}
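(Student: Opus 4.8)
The plan is to exploit the representation \eqref{ls2}, $\hat k_0=\arg\max_k|D_k|$, by comparing the random functional $D_k$ with its deterministic ``signal'' and showing that the latter has a sharp kink at the true change point. First I would introduce the centered summands $e_i=Y_i-\mu_i$, where $\mu_i=\gamma_1$ for $i\le k_0$ and $\mu_i=\gamma_2$ for $i>k_0$, together with the partial sums $E_k=\sum_{i=1}^k e_i$ and the expected partial sums $M_k=ES_k$ (so $M_k=k\gamma_1$ for $k\le k_0$ and $M_k=k_0\gamma_1+(k-k_0)\gamma_2$ otherwise). A direct computation then gives the exact identity
\begin{equation*}
D_k-\tilde g_k=-\frac{1}{S_n}\Bigl(E_k-\frac{M_k}{M_n}E_n\Bigr),\qquad \tilde g_k:=\frac{k}{n}-\frac{M_k}{M_n}.
\end{equation*}
The deterministic part $\tilde g_k$ is piecewise linear in $k/n$, vanishes at $k=0$ and $k=n$, and attains its extremum at $k=k_0$, where it equals $\tau_n(1-\tau_n)(\gamma_2-\gamma_1)/\bar\gamma_n$, with $\tau_n=k_0/n$ and $\bar\gamma_n=M_n/n=\tau_n\gamma_1+(1-\tau_n)\gamma_2$. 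In other words $\tilde g_k$ is a ``tent'' with apex at $k_0$.

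The key structural fact is the resulting kink inequality. Since the left and right slopes of $|\tilde g_k|$ have magnitudes $(1-\tau_n)|\gamma_2-\gamma_1|/\bar\gamma_n$ and $\tau_n|\gamma_2-\gamma_1|/\bar\gamma_n$, one obtains
\begin{equation*}
|\tilde g_{k_0}|-|\tilde g_k|\ \ge\ C_n\Bigl|\tfrac{k}{n}-\tau_n\Bigr|,\qquad C_n:=\frac{\min(\tau_n,1-\tau_n)\,|\gamma_2-\gamma_1|}{\bar\gamma_n},
\end{equation*}
uniformly in $k$. Using that $\hat k_0$ maximizes $|D_k|$ we have $|D_{\hat k_0}|\ge|D_{k_0}|$, and two applications of the triangle inequality together with the identity above turn this into
\begin{equation*}
C_n\,|\hat\tau-\tau_n|\ \le\ |\tilde g_{k_0}|-|\tilde g_{\hat k_0}|\ \le\ 2\sup_{1\le k\le n}|D_k-\tilde g_k|,\qquad \hat\tau=\hat k_0/n .
\end{equation*}
Thus everything reduces to a uniform bound on the fluctuation term.

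To bound $\sup_k|D_k-\tilde g_k|$ I would first note that $S_n=M_n(1+o_p(1))$ with $M_n\sim n\bar\gamma_n$, because $E_n/M_n=O_p((n\Delta_n)^{-1/2})=o_p(1)$; hence $\sup_k|D_k-\tilde g_k|\le M_n^{-1}(1+o_p(1))\bigl(\sup_k|E_k|+|E_n|\bigr)$. The decisive estimate is then the maximal bound $\sup_{k\le n}|E_k|=O_p(\sigma_n\sqrt{n\log n})$, obtained from a Bernstein/H\'{a}jek--R\'{e}nyi inequality and a union bound over the $n$ candidate indices, using that the $e_i$ are independent, bounded by $1/\Delta_n$, and have variance $\sigma_n^2\sim\lambda_m/\Delta_n$. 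It is precisely this uniform control over all $k$ that produces the logarithmic factor. Since $\sigma_n\sqrt{n}\sim\sqrt{n/\Delta_n}$ and $M_n\sim n\bar\gamma_n$, the two occurrences of $\bar\gamma_n$ cancel and we arrive at $\sup_k|D_k-\tilde g_k|=O_p\bigl(\bar\gamma_n^{-1}(n\Delta_n)^{-1/2}\sqrt{\log n}\bigr)$. Substituting into the displayed inequality, the factor $\bar\gamma_n$ cancels against $C_n$ and (absorbing $\min(\tau_n,1-\tau_n)\to\min(\tau,1-\tau)>0$ into $O_p$) gives $|\hat\tau-\tau_n|=(n\Delta_n)^{-1/2}(\gamma_2-\gamma_1)^{-1}O_p(\sqrt{\log n})$; since $|\tau_n-\tau|\le 1/n$ is negligible against $(n\Delta_n)^{-1/2}$, \eqref{result1} follows.

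I expect the main obstacle to be the uniform (in $k$) control of the partial-sum fluctuations, which forces the maximal inequality and with it the $\sqrt{\log n}$ term; a secondary delicate point is tracking how the $1/\Delta_n$ scaling built into the definition of $Y_i$ (through $\sigma_n^2\sim\lambda_m/\Delta_n$) propagates into the $(n\Delta_n)^{-1/2}$ rate while leaving only $(\gamma_2-\gamma_1)^{-1}$ in the final bound. This preliminary rate already yields consistency of $\hat\tau$ whenever $\log n=o(n\Delta_n)$, and it is the estimate that will later be localized near $\tau$ to obtain the limiting distribution.
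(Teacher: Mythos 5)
Your proof is correct, and it reaches the bound by a route that is parallel in spirit but more self-contained than the paper's. The paper takes the deterministic ``identifiability'' step as a black box: it cites formulas (10)--(14) of Bai (1994) to assert directly that $|\hat\tau-\tau|\leq C_\tau(\gamma_2-\gamma_1)^{-1}\sup_k|V_k-EV_k|$, and then spends its effort on the stochastic part, namely the H\'ajek--R\'enyi bound $\max_k k^{-1/2}|\sum_{i\le k}Z_i|=O_p(\sqrt{\log n})$ with the $\Delta_n$-scaling of $\mathrm{Var}(Z_i)$ tracked explicitly. You instead work with $D_k$ rather than $V_k$ and rebuild the deterministic step from scratch: the exact identity $D_k-\tilde g_k=-S_n^{-1}(E_k-\tfrac{M_k}{M_n}E_n)$, the observation that $\tilde g_k$ is a tent with apex at $k_0$ and slope of order $|\gamma_2-\gamma_1|/\bar\gamma_n$, and the resulting kink inequality $C_n|\hat\tau-\tau_n|\le 2\sup_k|D_k-\tilde g_k|$. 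This is a genuine gain in transparency --- the comparison function for $D_k$ is piecewise linear, whereas $EV_k$ carries the $\sqrt{k(n-k)}$ weight and is harder to handle by hand, which is presumably why the paper defers to Bai --- and it makes the constant $C_\tau$ and the cancellation of $\bar\gamma_n$ completely explicit. The stochastic half of your argument (H\'ajek--R\'enyi/union bound with $c_k=\sqrt k$, variance $\sigma_n^2\sim\lambda_m/\Delta_n$, hence the $\sqrt{\log n}$ and the $(n\Delta_n)^{-1/2}$ rate) is the same as the paper's. Two cosmetic points: the phrase ``the two occurrences of $\bar\gamma_n$ cancel'' in the fluctuation bound is slightly loose --- what survives there is $\sqrt{\lambda}/\bar\gamma_n$, which is merely $O(1)$ since the rates are fixed in this theorem; the true cancellation of $\bar\gamma_n$ happens only against $C_n$ in the final step, as you then correctly note --- and Kolmogorov's maximal inequality would even give $\sup_k|E_k|=O_p(\sigma_n\sqrt n)$ without the logarithm, though the weaker bound is all the statement requires.
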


\begin{proof}
By the same arguments of Bai (1994), Section 3 and by using the formulas (10)-(14) therein, we have that
\begin{equation}\label{boundin}
|\hat\tau-\tau|\leq  C_\tau (\gamma_2-\gamma_1)^{-1}\sup_k|V_k-EV_k|,
\end{equation}
where $C_\tau$
is a   constant depending only on $\tau$. Let
$Z_i=\frac{\mathbf{1}_{\{|\eta_i|<v\Delta_n\}}-(1-e^{\lambda\Delta_n})}{\sqrt{\Delta_n}}$,
given that
\begin{eqnarray*}
V_k-EV_k&=&\frac{1}{\sqrt{n\Delta_n}}\sqrt{\frac{k}{n}}\frac{1}{\sqrt{n-k}}\sum_{i=k+1}^n Z_i\\
&&+\frac{1}{\sqrt{n\Delta_n}}\sqrt{1-\frac{k}{n}}\frac{1}{\sqrt{k}}\sum_{i=1}^k Z_i
\end{eqnarray*}
we obtain that
\begin{equation}\label{intp}
|V_k-EV_k|\leq \frac{1}{\sqrt{n\Delta_n}}\left\{\bar{Z}_{n-k}+\bar{Z}_k\right\}.
\end{equation}
where $\bar Z_k = \frac{1}{\sqrt{k}} \sum_{i=1}^k Z_i$ and
 $\bar Z_{n-k} = \frac{1}{\sqrt{(n-k)}} \sum_{i=k+1}^n Z_i$.
By applying Haj\'ek-Renyi inequality for martingales we have that
\begin{eqnarray}
P\left\{\max_{1\leq k \leq n}\left|\frac{\sum_{i=1}^k Z_i}{c_k}\right|>\alpha \right\} &\leq& \frac{1}{\alpha^2}\sum_{k=1}^n\frac{E(Z_k ) ^2}{c_k^2}\notag\\
&=& \frac{  (1-e^{-\lambda\Delta_n})e^{-\lambda\Delta_n}}{\alpha^2\Delta_n} \sum_{k=1}^n\frac{1}{c_k^2}\notag\\
&\leq& \frac{\lambda\Delta_n+o(\Delta_n)}{\alpha^2\Delta_n} \sum_{k=1}^n\frac{1}{c_k^2}\notag\\
&=& \frac{\lambda+o(1)}{\alpha^2}
\sum_{k=1}^n\frac{1}{c_k^2}\label{hr}
\end{eqnarray}
Choosing $c_k=\sqrt{k}$ and observing that $\sum_{k=1}^n k^{-1} \leq  C \log n$, for some $C>0$ (see e.g. Bai, 1994),  we have that
\begin{equation}\label{ratecon}
\max_{1\leq k \leq n}\frac{1}{\sqrt{k}}\sum_{i=1}^k Z_i=O_p\left(\sqrt{\log n}\right).
\end{equation}
Then from the relationships \eqref{intp} and \eqref{ratecon} we obtain the result \eqref{result1}.
\end{proof}

\begin{remark}
By means of the law of iterated logarithm we obtain immediately the following rate of convergence which improve the previous result. 
We have that
\begin{equation}\label{result2}
|\hat\tau-\tau|=(n\Delta_n)^{-1/2}(\gamma_2-\gamma_1)^{-1}O_p(\sqrt{\log\log n}).
\end{equation}
\end{remark}

\begin{remark}
Theorem \ref{teo1} implies that, under the additional hypothesis $\Delta_n = O(n^\varepsilon)$, $\varepsilon\in(-1,0)$ we have also consistency, i.e. $(n\Delta_n)^\beta(\hat\tau-\tau)\to0$ in probability for any $\beta\in(0,1/2)$.
\end{remark}
We are able to improve the rate of convergence of $\hat\tau$.

\begin{theorem}\label{consim}
We have the following result
\begin{equation} \label{imrate}
\hat\tau-\tau=O_p\left(\frac{1}{n\Delta_n (\gamma_2-\gamma_1)^2}\right).
\end{equation}
\end{theorem}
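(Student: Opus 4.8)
The goal is to sharpen the rate from the $O_p((n\Delta_n)^{-1/2}\sqrt{\log n})$ of Theorem \ref{teo1} to the faster $O_p((n\Delta_n)^{-1})$ rate (absorbing the $(\gamma_2-\gamma_1)^{-2}$ factor), which is the standard improvement one obtains in change-point problems by exploiting a local analysis around the true break rather than a uniform bound over all $k$. The plan is to follow Bai (1994), Section 3, again, but now to localize. I would fix a large constant $M$ and show that, with probability approaching one, the maximizer $\hat k_0$ of $V_k^2$ (equivalently of $|D_k|$, by \eqref{ls2}) must lie within a window of radius $M/((\gamma_2-\gamma_1)^2\Delta_n)$ of the true $k_0=[n\tau]$; dividing by $n$ and recalling $n\Delta_n=T\to\infty$ then yields \eqref{imrate}.

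The key steps, in order, are as follows. First I would write the objective in the centered form used in Theorem \ref{teo1}, splitting $V_k$ into its mean $EV_k$ and the fluctuation $V_k-EV_k$ governed by the partial sums of the $Z_i$. The deterministic part $EV_k$ is a triangular-type profile peaked at $k=k_0$ whose curvature near the peak is proportional to $(\gamma_2-\gamma_1)^2$; a direct computation shows that for $k=k_0+j$ the drop $E(V_{k_0}^2-V_k^2)$ is bounded below by a quantity of order $|j|(\gamma_2-\gamma_1)^2/n$. Second, for the event $\{|\hat k_0-k_0|>\ell\}$ with $\ell = M/((\gamma_2-\gamma_1)^2\Delta_n)$, I would bound the probability that the stochastic fluctuation can overcome this deterministic drop, i.e.\ that
\begin{equation*}
\sup_{|k-k_0|>\ell}\bigl(V_k^2-V_{k_0}^2\bigr)\geq 0 .
\end{equation*}
Third, this reduces to controlling increments of the martingale $\sum_i Z_i$ over blocks away from $k_0$, which is exactly the kind of quantity estimated by the H\'ajek--R\'enyi inequality \eqref{hr}, except that now I apply it to the tail sums $\sum_{i=k_0+1}^{k} Z_i$ with weights $c_k=|k-k_0|$ (or $\sqrt{|k-k_0|}$), so that the fluctuation over a block of length $|j|$ is $O_p(\sqrt{|j|\,\lambda/\Delta_n})$ after restoring the $\Delta_n^{-1/2}$ normalization in $Z_i$.

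The main obstacle, and the heart of the argument, is the balancing step: comparing the deterministic signal drop $\asymp |j|(\gamma_2-\gamma_1)^2/n$ against the stochastic noise $\asymp \sqrt{|j|\,\lambda/(n^2\Delta_n)}$ accumulated over a window of size $|j|$, and showing that for $|j|\geq \ell$ the signal dominates with high probability. Setting the two of equal order gives precisely $|j|(\gamma_2-\gamma_1)^2/n \asymp \sqrt{|j|/(n^2\Delta_n)}$, i.e.\ $|j|\asymp 1/((\gamma_2-\gamma_1)^2\Delta_n)$, which is the claimed window width. The care required here is that the mesh $\Delta_n$ enters both the curvature of $EV_k$ (through $\gamma_m=\lambda_m+o(1)$) and the variance of $Z_i$, so one must track the $\Delta_n$ factors honestly; this is the ``technical but crucial adjustment'' relative to Bai (1994) anticipated in the introduction. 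Once the window is pinned down, a standard summation over dyadic blocks $2^{r}\ell\leq |j|<2^{r+1}\ell$ controls the supremum and drives $P\{|\hat k_0-k_0|>\ell\}$ below any prescribed $\varepsilon$ for $M$ large, completing the proof.
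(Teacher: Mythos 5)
Your overall strategy --- localize around $k_0$, lower-bound the deterministic drop of the objective away from $k_0$, and control the stochastic fluctuation over blocks with a maximal inequality --- is exactly the route the paper takes (following Bai's Proposition~3): it defines $D_{n,M}=\{k:\ n\delta\le k\le n(1-\delta),\ |k-k_0|>M\Delta_n^{-1}\gamma_n^{-2}\}$ and bounds $Pr\{\sup_{k\in D_{n,M}}|V_k|\ge|V_{k_0}|\}$ by splitting $V_k-V_{k_0}$ into a drift of order $\gamma_n|k_0-k|/n$ and the block sums $G(k)$, $H(k)$, to which H\'ajek--R\'enyi bounds of the type \eqref{hr} are applied. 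Two issues, though. A minor one: the restriction to $n\delta\le k\le n(1-\delta)$ is not optional, since the linear lower bound on the drop of $EV_k$ degenerates as $k/n$ approaches $0$ or $1$; you must first invoke the consistency from Theorem \ref{teo1} to discard $k$ outside $(\delta n,(1-\delta)n)$, and your sketch omits this. Relatedly, of your two proposed weightings only $c_k=|k-k_0|$ works: with $c_k=\sqrt{|k-k_0|}$ the H\'ajek--R\'enyi sum $\sum_{j\ge m}c_j^{-2}$ diverges logarithmically and you merely recover the $\sqrt{\log n}$ rate of Theorem \ref{teo1}; the linear weights give $\sum_{j\ge m}j^{-2}=O(m^{-1})$, which is what produces the $O(D/M)$ tail.

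The more serious problem is in the step you yourself call the heart of the argument. You compare the drop of the \emph{squared} statistic, $E(V_{k_0}^2-V_k^2)\asymp|j|\gamma_n^2/n$, against the fluctuation of the \emph{unsquared} one, which is $\asymp\sqrt{|j|\lambda/(n^2\Delta_n)}$. Solving the balance equation you actually display, $|j|\gamma_n^2/n\asymp\sqrt{|j|/(n^2\Delta_n)}$, gives $|j|\asymp\gamma_n^{-4}\Delta_n^{-1}$, not the claimed $|j|\asymp\gamma_n^{-2}\Delta_n^{-1}$; taken literally your computation proves only the rate $O_p\bigl((n\Delta_n\gamma_n^4)^{-1}\bigr)$, which in the local-alternative regime $\gamma_n\to0$ of Section \ref{dist} is strictly weaker than \eqref{imrate} and would break the normalization used there. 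The bookkeeping must be done consistently on one side or the other: either work with $V_k-V_{k_0}$ directly, comparing the drop $\asymp|j|\gamma_n/n$ with the noise $\asymp\sqrt{|j|/(n^2\Delta_n)}$, which does give $|j|\asymp\gamma_n^{-2}\Delta_n^{-1}$ --- this is what the paper does through the decomposition $Pr\{\sup|V_k|\ge|V_{k_0}|\}\le Pr\{\sup(V_k-V_{k_0})\ge0\}+Pr\{\sup(V_k+V_{k_0})\le0\}$ --- or, if you keep $V_k^2-V_{k_0}^2$, multiply the noise by the factor $V_k+V_{k_0}\asymp\gamma_n$. With that correction your dyadic-block summation is a legitimate substitute for the paper's weighted H\'ajek--R\'enyi bound, and the rest of the plan goes through.
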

\begin{proof}
We use the same framework of the proof of the Proposition 3 in Bai (1994), Section 4, therefore we omit the details.

We choose a $\delta>0$ such that $\tau\in (\delta,1-\delta)$. Since $\hat k/n$ is consistent for $\tau$, for every $\varepsilon>0$, $Pr\{\hat k/n\not\in (\delta,1-\delta)\}<\varepsilon$ when $n$ is large. In order to prove \eqref{imrate} it is sufficient to show that $Pr\{|\hat\tau-\tau|>M(n\Delta_n\gamma_n^2)^{-1}\}$ is small when $n$ and $M$ are large, where $\gamma_n=\gamma_2-\gamma_1$. We are interested to study the behavior of $V_k$ for $n\delta\leq k \leq n(1-\delta)$, $0<\delta<1$. We define for any $M>0$ the set $D_{n,M}=\{k:n\delta\leq k \leq n(1-\delta),|k-k_0|>M\Delta_n^{-1}\gamma_n^{-2}\}$. Then we have that
$$Pr\{|\hat\tau-\tau|>M(n\Delta_n\gamma_n^2)^{-1}\}\leq \varepsilon+Pr\{\sup_{k\in D_{n,M}}|V_k|\geq |V_{k_0}|\},$$
for every $\varepsilon>0.$ Thus we study the behavior of $Pr\{\sup_{k\in D_{n,M}}|V_k|\geq |V_{k_0}|\}$. It is possible to prove that
\begin{equation}
\label{int1}
\begin{aligned}
Pr\left\{\sup_{k\in D_{n,M}}|V_k|\geq |V_{k_0}|\right\}\leq&\quad Pr\left\{\sup_{k\in D_{n,M}} V_k - V_{k_0}\geq 0\right\}\\
& +Pr\left\{\sup_{k\in D_{n,M}}V_k+V_{k_0}\leq 0\right\}\\
=&\quad P+Q
\end{aligned}
\end{equation}   
Furthermore
\begin{eqnarray}\label{int2}
 Q
 &\leq& 2Pr\left\{\sup_{k\leq n(1-\delta)}\frac{1}{n-k}\left|\sum_{i=k+1}^n (Y_i-\gamma_2)\right|\geq \frac{1}{4}EV_{k_0}\right\}\\
 &&+2Pr\left\{\sup_{k\geq n\delta}\frac{1}{k}\left|\sum_{i=1}^k (Y_i-\gamma_1)\right|\geq \frac{1}{4}EV_{k_0}\right\}.\notag
\end{eqnarray}
By observing that $\sum_{i=m}^\infty i^{-2}=O(m^{-1})$,  the Haj\'{e}k-Renyi inequality yields
\begin{equation}\label{hr2}
P\left\{\max_{ k\geq  m}\left|\frac{1}{k}\sum_{i=1}^k Z_i\right|>\alpha \right\} \leq \frac{1}{\alpha^2m}\frac{(1-e^{\lambda\Delta_n})e^{-\lambda\Delta_n}}{\sqrt{\Delta_n}},
\end{equation}
where r.v.'s $Z_i$ are defined in the proof of Theorem \ref{teo1}.
The inequality \eqref{hr2} implies that \eqref{int2} tends to zero as $n$ tends to infinity.
Let $b(k)=\sqrt{((k/n)(1-k/n))}, k=1,2,...,n,$ for the first term in the right-hand of \eqref{int1} we have that
\begin{equation}
\label{int3}
\begin{aligned}
P\leq & \quad Pr\left\{\sup_{k\in D_{n,M}}\frac{n}{|k_0-k|}|G(k)|>\frac{\gamma_nC_\tau}{2}\right\}\\
&+Pr\left\{\sup_{k\in D_{n,M}}\frac{n}{|k_0-k|}|H(k)|>\frac{\gamma_nC_\tau}{2}\right\}\\
=&\quad P_1+P_2,
\end{aligned}
\end{equation}
where
\begin{equation}\label{g}
G(k)=b(k_0)\frac{1}{k_0}\sum_{i=1}^{k_0}(Y_i-\gamma_1)-b(k)\frac{1}{k}\sum_{i=1}^{k}(Y_i-\gamma_1)
\end{equation}
\begin{equation}\label{h}
H(k)=b(k)\frac{1}{n-k}\sum_{i=k+1}^{n}(Y_i-\gamma_2)-b(k_0)\frac{1}{n-k_0}\sum_{i=k_0+1}^{n}(Y_i-\gamma_2)
\end{equation}
We prove that $P_1$ tends to zero when $n$ and $M$ are large. Thus
we consider only $k\leq k_0$ or more precisely those values of $k$ such that $n\delta\leq k \leq n\tau-M\Delta_n^{-1}\gamma_n^{-2}$. For $k\geq n \delta$, we have
\begin{equation}\label{modG}
|G(k)|\leq \frac{k_0-k}{n\delta k_0}\left|\sum_{i=1}^{k_0}(Y_i-\gamma_1)\right|+B\frac{k_0-k}{n}\frac{1}{n\delta }\left|\sum_{i=1}^{k}(Y_i-\gamma_1)\right|+\frac{1}{n\delta }\left|\sum_{i=k+1}^{k_0}(Y_i-\gamma_1)\right|,
\end{equation}
where $B\geq 0$ satisfies $|b(k_0)-b(k)|\leq B|k_0-k|/n$.
By means of \eqref{hr}, \eqref{hr2} and \eqref{modG},  we obtain
\begin{eqnarray*}
P_1&\leq& Pr\left\{\frac{1}{n\tau}\left|\sum_{i=1}^{[n\tau]}(Y_i-\gamma_1)\right|>\frac{\delta\gamma_n C_\tau}{6}\right\}\\
&&+Pr\left\{\sup_{1\leq k \leq n}\frac{1}{n}\left|\sum_{i=1}^{k}(Y_i-\gamma_1)\right|>\frac{\delta\gamma_n C_\tau}{6B}\right\}\\
&&+Pr\left\{\sup_{ k \leq n\tau-M\Delta_n^{-1}\gamma_n^{-2}}\frac{1}{n\tau-k}\left|\sum_{i=k+1}^{[n\tau]}(Y_i-\gamma_1)\right|>\frac{\delta\gamma_n C_\tau}{6}\right\}\\
&\leq&\frac{36 D }{(\delta C_\tau)^2 \tau n\Delta_n\gamma_n^2}+\frac{36B^2D}{(\delta C_\tau)^2  n\Delta_n\gamma_n^2}+\frac{36D}{\delta C_\tau^2M}
\end{eqnarray*}
where $D=\frac{(1-e^{-\lambda\Delta_n})e^{-\lambda\Delta_n}}{\Delta_n}\leq\lambda+o(1)$. When $n$ and $M$ are large the last three terms are negligible. Analogously we derive the proof of $P_2$.

 \end{proof}
\section{Asymptotic distributions}\label{dist}

We want to study in this Section the asymptotic distribution of $\hat\tau$ under our limiting framework for small variations of the rate of change of the direction. The case $\lambda_n=\lambda_2-\lambda_1$ equal to a constant is less interesting because when $\lambda_n$ is large the estimate of $k_0$ is quite precise.  

We note that $\lambda_n=\lambda_2-\lambda_1 \to 0$ implies $\gamma_n=\gamma_2-\gamma_1 \to 0$. By adding the condition
\begin{equation}\label{con}
\lambda_n\to 0,\quad \frac{\sqrt{n\Delta_n}\gamma_n}{\sqrt{\log n}}\to \infty,
\end{equation}
 the consistency of $\hat\tau$ follows immediately either from Theorem \ref{teo1} or Theorem \ref{consim}. In order to obtain the main result of this Section, it is useful to observe that
\begin{equation}
\hat k_0=\arg\max_kV_k^2=\arg\max_kn\Delta_n(V_k^2-V_{k_0}^2)
\end{equation}
and to define a two-sided Brownian motion $W(v)$ in the following manner
\begin{equation}
W(u)=
\begin{cases}
W_1(-u),& u<0 \\
W_2(u), &u\geq0
\end{cases}
\end{equation} 
where $W_1,W_2$ are two independent Brownian motions. Now we present the following convergence in distribution result.
\begin{theorem}
Under assumption \eqref{con}, for $n\Delta_n\to \infty,\Delta_n\to 0$ as $n\to \infty$, we have that
\begin{equation}
\frac{n\Delta_n \gamma_n^2(\hat\tau-\tau)}{\tilde\lambda}\stackrel{d}{\to}
\arg\max_v\left\{W(v)-\frac{|v|}{2}\right\},
\end{equation}
where $W(v)$ is a two-sided Brownian motion and $\tilde\lambda$ is any consistent estimator for $\lambda_1$ or $\lambda_2$.
\end{theorem}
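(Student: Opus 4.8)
The plan is to localise the profiled objective around the true change point $k_0$ and show that, on the natural scale dictated by Theorem \ref{consim}, it converges weakly to a two-sided Brownian motion with linear drift, whose argmax is the stated limit. By Theorem \ref{consim} we have $|\hat k_0-k_0|=O_p(\Delta_n^{-1}\gamma_n^{-2})$, so it suffices to analyse $V_k^2-V_{k_0}^2$ for $k=k_0+j$ with $j$ confined to a window of radius $C\Delta_n^{-1}\gamma_n^{-2}$. Introducing the local coordinate $v=\Delta_n\gamma_n^2 j$, so that the quantity of interest is $\hat v=\Delta_n\gamma_n^2(\hat k_0-k_0)=n\Delta_n\gamma_n^2(\hat\tau-\tau)$, I would define the piecewise-constant process $\Xi_n(v)=n\Delta_n(V_{k_0+j}^2-V_{k_0}^2)$ and note that, by the identity $\hat k_0=\arg\max_k n\Delta_n(V_k^2-V_{k_0}^2)$ recalled just before the statement, $\hat v=\arg\max_v\Xi_n(v)$.

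Next I would split $V_k=EV_k+(V_k-EV_k)$ and expand $\Xi_n(v)$ into a deterministic drift $n\Delta_n[(EV_k)^2-(EV_{k_0})^2]$, a stochastic cross term $2n\Delta_n[EV_k(V_k-EV_k)-EV_{k_0}(V_{k_0}-EV_{k_0})]$, and a quadratic remainder. Computing $EV_k$ from $E\bar Y_k$ and $E\bar Y_{n-k}$ — which mix the two regimes once $k$ crosses $k_0$ — gives $(EV_k)^2-(EV_{k_0})^2=-\gamma_n^2|j|/n+o(\cdot)$ uniformly on the window, so the drift tends to $-|v|$. Using $EV_k\approx EV_{k_0}\approx\gamma_n\sqrt{\tau(1-\tau)}$, the cross term collapses, after the sums over the common indices cancel, to a multiple of the centred partial sum $\sum_{i=k_0+1}^{k}(Y_i-\gamma_2)=\Delta_n^{-1/2}\sum_{i=k_0+1}^{k}Z_i$ for $j>0$ (and the analogous sum of $(Y_i-\gamma_1)$ over $(k,k_0]$ for $j<0$), where the $Z_i$ are the triangular-array variables of the proof of Theorem \ref{teo1}, with $EZ_i=0$ and $Var(Z_i)=\lambda+o(1)$; note that since $\lambda_n\to0$ under \eqref{con}, $\lambda_1$ and $\lambda_2$ share the common limit $\lambda$, so both regimes contribute the same variance. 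Since the indices on the two sides of $k_0$ are disjoint, the invariance principle used in the proof of Theorem \ref{H0} yields joint convergence of these partial-sum processes to two independent Brownian motions, i.e. to the two-sided $W$; a short computation with the Brownian scaling $W(a\,\cdot)\stackrel{d}{=}\sqrt a\,W(\cdot)$ turns the prefactors into a constant, and the quadratic remainder is $o_p(1)$ uniformly on compacts. The outcome is $\Xi_n(v)\Rightarrow 2\sqrt{\lambda}\,W(v)-|v|$ as processes in $v$.

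Finally, the limit $2\sqrt\lambda\,W(v)-|v|$ has drift to $-\infty$ as $|v|\to\infty$, so its argmax is almost surely finite and unique; combined with the tightness of $\hat v$ furnished by Theorem \ref{consim}, this licenses the continuous-mapping theorem for the argmax functional and gives $\hat v\stackrel{d}{\to}\arg\max_v\{2\sqrt\lambda\,W(v)-|v|\}$. Rescaling $v=\lambda u$ and applying Brownian self-similarity once more turns this into $\lambda\,\arg\max_u\{W(u)-|u|/2\}$; dividing by $\lambda$, and replacing $\lambda$ by any consistent estimator $\tilde\lambda$ (harmless by Slutsky, as $\lambda/\tilde\lambda\to1$), yields exactly $\frac{n\Delta_n\gamma_n^2(\hat\tau-\tau)}{\tilde\lambda}\stackrel{d}{\to}\arg\max_v\{W(v)-|v|/2\}$.

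The main obstacle — and the point where the argument of Bai (1994) needs the adjustments announced in the Introduction — is that we are dealing with a triangular array whose per-observation variance $Var(Y_i)\sim\lambda/\Delta_n$ diverges, while the localisation width $\Delta_n^{-1}\gamma_n^{-2}$ depends jointly on the mesh $\Delta_n$ and the vanishing jump $\gamma_n$ under \eqref{con}. Proving the functional convergence of the localised partial-sum processes uniformly on compact $v$-sets in this $\Delta_n$-dependent regime, and controlling the cross and remainder terms uniformly so that the argmax continuous-mapping theorem applies, is the delicate part; the Lindeberg/invariance-principle argument behind Theorem \ref{H0}, together with the Haj\'ek--R\'enyi bounds behind Theorems \ref{teo1} and \ref{consim}, are the tools that carry it through.
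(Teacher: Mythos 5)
Your proposal follows essentially the same route as the paper: localise $n\Delta_n(V_k^2-V_{k_0}^2)$ on a window of width $O(\Delta_n^{-1}\gamma_n^{-2})$ around $k_0$, split it into a deterministic drift tending to $-|v|$, a cross term converging via the invariance principle to $2\sqrt{\lambda}\,W(v)$ (with the two sides of $k_0$ giving the two independent Brownian motions), and a negligible quadratic remainder, then apply the argmax continuous-mapping theorem and Brownian scaling to reduce to $\arg\max_v\{W(v)-|v|/2\}$. The only cosmetic difference is that you centre the expansion at $EV_k$ while the paper factors through $2EV_{k_0}(V_k-V_{k_0})$ and reuses its $G(k)$, $H(k)$ decomposition; the substance is identical.
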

\begin{proof}
The proof follows the same steps in Bai (1994), Theorem 1, hence we only sketch the parts of the proof that differ.
We consider only $v\leq 0$ because of symmetry.
Let $K_n(v)=\{k:k=[k_0+v\Delta_n^{-1}\gamma_n^{-2}], -M\leq v \leq 0, M>0\}$ and
\begin{equation}
\Lambda_n(v)=n\Delta_n(V_k^2-V_{k_0}^2)
\end{equation}
with $k\in K_n(v).$ We note that
\begin{eqnarray}\label{con1}
n\Delta_n(V_k^2-V_{k_0}^2)&=&2n\Delta_nEV_{k_0}(V_k-V_{k_0})\notag\\
&&+2n\Delta_n(V_{k_0}-EV_{k_0})(V_k-V_{k_0})\notag\\
&&+n\Delta_n(V_k-V_{k_0})^2
\end{eqnarray}
The last two terms in \eqref{con1} are negligible on $K_n(v)$. Since $\sqrt{n\Delta_n}(V_{k_0}-EV_{k_0})$ is bounded by \eqref{boundin}, we have to show that $\sqrt{n\Delta_n}|V_k-V_{k_0}|$ is bounded. In particular, we can write 
$$\sqrt{n\Delta_n}|V_k-V_{k_0}|\leq \sqrt{n\Delta_n}|G(k)+H(k)|+\sqrt{n\Delta_n}|EV_k-EV_{k_0}|,$$
where $G(k)$ and $H(k)$ are defined respectively in \eqref{g} and \eqref{h}. The upper bound \eqref{modG} is $o_p(1)$, because the first term is such that
\begin{equation}\label{smallo}
\begin{aligned}
\sqrt{n\Delta_n}\frac{k_0-k}{n\delta k_0}\left|\sum_{i=1}^{k_0}(Y_i-\gamma_1)\right|&
\leq \frac{M}{\delta\tau n\Delta_n\gamma_n^2}\left|\sum_{i=1}^{k_0}\sqrt{\Delta_n}(Y_i-\gamma_1)\right|\\
&=\frac{O_p(1)}{n\Delta_n\gamma_n^2}=o_p(1),
\end{aligned}
\end{equation}
similarly for the second term and for the third term we apply the invariance principle \eqref{inv}.
Now we explicit the limiting distribution for
\begin{equation}
2n\Delta_nEV_{k_0}(V_k-V_{k_0})=2\sqrt{\tau(1-\tau)}n\Delta_n\gamma_n(V_{[k_0+v\Delta_n^{-1}\lambda_n^{-2}]}-V_{k_0}).
\end{equation}
For simplicity we shall assume that $k_0+v\Delta_n^{-1} \gamma_n^{-2}$ and $v\Delta_n^{-1} \gamma_n^{-2}$ are integers. We observe that
\begin{equation}
n\Delta_n\gamma_n(V_k-V_{k_0})=n\Delta_n\gamma_n(G(k)+H(k))-n\Delta_n\gamma_n(EV_{k_0}-EV_k),
\end{equation}
where $G(k),H(k)$ are defined in the expressions \eqref{g}, \eqref{h}. We can rewrite $G(k)$ as follws
\begin{equation}
\label{gbis}
\begin{aligned}
G(k) =&  \quad b(k_0)\frac{k-k_0}{kk_0}\sum_{i=1}^{k_0}(Y_i-\gamma_1)+\frac{b(k_0)-b(k)}{k}\sum_{i=1}^k(Y_i-\gamma_1)\\
&+b(k_0)\frac{1}{k}\sum_{i=k+1}^{k_0}(Y_i-\gamma_1).
\end{aligned}
\end{equation}
By the same arguments used to prove \eqref{smallo} we can show that the first two terms in \eqref{gbis} multiplied by $n\Delta_n\gamma_n$ are negligible on $K_n(M)$. Furthermore $b(k_0)=\sqrt{\tau(1-\tau)}$ and $n/k\to 1/\tau$ for $k\in K_n(M)$, then we get that
\begin{eqnarray}
n\Delta_n\gamma_n G(k_0&+&v\Delta_n^{-1}\gamma_n^{-2})=n\Delta_n\gamma_nb(k_0)\frac{1}{k}\sum_{i=k+1}^{k_0}(Y_i-\gamma_1)+o_p(1)\notag\\
&=&b(k_0)\frac{n}{k}\left\{\gamma_n\sqrt{\Delta_n}\sum_{i=k+1}^{k_0}\sqrt{\Delta_n}(Y_i-\gamma_1)\right\}+o_p(1)\notag\\
&=&b(k_0)\frac{n}{k}\left\{\gamma_n\sqrt{\Delta_n}\sum_{i=1}^{|v|\Delta_n^{-1}\gamma_n^{-2}}\sqrt{\Delta_n}(Y_{i+k}-\gamma_1)\right\}+o_p(1)\notag\\
&\stackrel{d}{\to}&\frac{\sqrt{(1-\tau)\tau}}{\tau} \sqrt{\lambda_1}W_1(-v)
\end{eqnarray}
where in the last step we have used the invariance principle \eqref{inv}. Analogously we show that
\begin{equation}
n\Delta_n\gamma_n H(k_0+v\Delta_n^{-1}\gamma_n^{-2})
\stackrel{d}{\to}\frac{\sqrt{(1-\tau)\tau}}{1-\tau} \sqrt{\lambda_1}W_1(-v).
\end{equation}
Since
\begin{equation}
n\Delta_n\gamma_n(EV_{k_0}-EV_k)\to \frac{|v|}{1\sqrt{\tau(1-\tau)}}
\end{equation}
we obtain that
\begin{equation}
\Lambda_n(v)\stackrel{d}{\to}2\left\{ \sqrt{\lambda_1}W_1(-v)-\frac{|v|}{2}\right\}.
\end{equation}
In the same way, for $v>0$, we can prove that
\begin{equation}
\Lambda_n(v)\stackrel{d}{\to}2\left\{ \sqrt{\lambda_1}W_2(v)-\frac{|v|}{2}\right\}.
\end{equation}
By applying the continuous mapping theorem and Theorem \ref{consim}.
\begin{equation}
\frac{n\Delta_n \gamma_n^2(\hat\tau-\tau)}{\hat\lambda}\stackrel{d}{\to}\frac{1}{\lambda_1}
\arg\max_v\Lambda_n(v).
\end{equation}
Since $aW(v)\stackrel{d}{=}W(a^2 v),a\in \mathbb{R}$, a change in variable transforms
$\arg\max_v\Lambda_n(v)$ into
  $\lambda_1\arg \max_v\left\{W(v)-\frac{|v|}{2}\right\},$ which concludes the proof. 
\end{proof}

%\begin{remark}
%Let $\hat\gamma_n=\hat\gamma_2-\hat\gamma_1$ be a consistent estimator of $\gamma_n$. A $100(1-\alpha)\%$ confidence interval is given by
%$$\left[\hat k-\left[c_\alpha\frac{\hat\gamma_n^2}{\hat\lambda}\right]-1,\hat k+\left[c_\alpha\frac{\hat\gamma_n^2}{\hat\lambda}\right]+1\right],$$
%where $c_\alpha$ is the $(1-\alpha/2)$th quantile of $\arg\max_v\left\{W(v)-\frac{|v|}{2}\right\}$. 
%\end{remark}
%The confidence intervals so-obtained require the estimate of two parameters in order to derive the critical values. Antoch and Hu\v{s}kov\'{a} (1995), performed a bootstrapping procedure to approximate the quantile of $\arg\max_v\left\{W(v)-\frac{|v|}{2}\right\}$ from which we can obtain an approximated critical value $c_\alpha$.

Using the consistency result, we are able to obtain the asymptotic distributions for the estimators $\hat \lambda_1,\hat\lambda_2$, defined in \eqref{estlam}.
\begin{theorem}
Under the assumption \eqref{con} we have that
\begin{equation}\label{estcon}
\sqrt{n\Delta_n}\left( \begin{array}{c} 
\hat\lambda_1 \\ 
\hat\lambda_2 \\  
\end{array} \right) \stackrel{d}{\to}N\left( 0,\Sigma \right),
\end{equation} 
where
\begin{equation}
\Sigma=\left( \begin{array}{cc} 
\tau^{-1}\lambda_1& 0\\ 
0 & (1-\tau)^{-1}\lambda_2 \\  
\end{array} \right).
\end{equation}
\end{theorem}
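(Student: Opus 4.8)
The plan is to reduce the statement to a central limit theorem for the block means $\hat\gamma_1=\bar Y_{\hat k_0}$ and $\hat\gamma_2=\bar Y_{n-\hat k_0}$, and then to transfer it to $\hat\lambda_1,\hat\lambda_2$ by the delta method. Writing $g(x)=-\Delta_n^{-1}\log(1-x\Delta_n)$, one has $\hat\lambda_m=g(\hat\gamma_m)$ and, since $1-\gamma_m\Delta_n=e^{-\lambda_m\Delta_n}$, also $\lambda_m=g(\gamma_m)$. A first order Taylor expansion gives $\hat\lambda_m-\lambda_m=g'(\bar\gamma_m)(\hat\gamma_m-\gamma_m)$ with $g'(x)=(1-x\Delta_n)^{-1}$ and $\bar\gamma_m$ between $\hat\gamma_m$ and $\gamma_m$; because $\hat\gamma_m\stackrel{p}{\to}\gamma_m$ and $g'(\gamma_m)=e^{\lambda_m\Delta_n}\to1$, Slutsky's lemma shows that $\sqrt{n\Delta_n}(\hat\lambda_m-\lambda_m)$ has the same limit as $\sqrt{n\Delta_n}(\hat\gamma_m-\gamma_m)$. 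It therefore suffices to identify the joint limit of the vector $\sqrt{n\Delta_n}\big((\hat\gamma_1-\gamma_1),(\hat\gamma_2-\gamma_2)\big)$, so that the limit law \eqref{estcon} is to be read for the vector recentred at $(\lambda_1,\lambda_2)$.

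The crucial step is to show that using the estimated index $\hat k_0$ rather than the true one $k_0=[n\tau]$ is asymptotically negligible, that is $\sqrt{n\Delta_n}(\bar Y_{\hat k_0}-\bar Y_{k_0})=o_p(1)$, and likewise for the second block. By Theorem \ref{consim} we have $\hat k_0-k_0=O_p(\Delta_n^{-1}\gamma_n^{-2})$, so for every $\varepsilon>0$ there is $M$ with $\hat k_0$ lying in the window $\{k:|k-k_0|\le M\Delta_n^{-1}\gamma_n^{-2}\}$ with probability exceeding $1-\varepsilon$; on this event it is enough to bound the supremum of $|\bar Y_k-\bar Y_{k_0}|$ over the window. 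Decomposing, for $k\ge k_0$, $\bar Y_k-\bar Y_{k_0}=-\bar Y_{k_0}\frac{k-k_0}{k}+\frac{1}{k}\sum_{i=k_0+1}^{k}Y_i$, the means of the two pieces combine into $(k-k_0)\gamma_n/k$, since the leading $\gamma_1$ contributions cancel, leaving the small factor $\gamma_n$; the centred fluctuations are controlled by the Haj\'ek--Renyi inequalities \eqref{hr}, \eqref{hr2} exactly as in Theorems \ref{teo1} and \ref{consim}. A short computation then shows that the combined bias is $O_p((n\Delta_n\gamma_n)^{-1})$, the fluctuation of the second piece is $O_p((n\Delta_n\gamma_n)^{-1})$, and the centred reweighting term is $O_p((n\Delta_n)^{-1/2}(n\Delta_n\gamma_n^2)^{-1})$; multiplied by $\sqrt{n\Delta_n}$ these are of order $(\sqrt{n\Delta_n}\gamma_n)^{-1}$ and $(n\Delta_n\gamma_n^2)^{-1}$, both of which tend to zero precisely because of condition \eqref{con}. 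This localisation argument, where $\hat k_0$ is random and correlated with the $Y_i$, is the main obstacle; the payoff of \eqref{con} is exactly that it forces these remainders to vanish after the $\sqrt{n\Delta_n}$ scaling.

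Once the replacement of $\hat k_0$ by $k_0$ is justified, it remains to find the joint limit of $\sqrt{n\Delta_n}(\bar Y_{k_0}-\gamma_1)$ and $\sqrt{n\Delta_n}(\bar Y_{n-k_0}-\gamma_2)$. The variables $\{Y_i\}_{i\le k_0}$ are i.i.d. with mean $\gamma_1$ and variance $\sigma_1^2=\Delta_n^{-2}(1-e^{-\lambda_1\Delta_n})e^{-\lambda_1\Delta_n}$, so the Lindeberg central limit theorem, applied exactly as in the proof of Theorem \ref{H0} (see \eqref{Lin}--\eqref{inv}), yields asymptotic normality with $\mathrm{Var}\big(\sqrt{n\Delta_n}\,\bar Y_{k_0}\big)=\frac{n}{k_0}\cdot\frac{(1-e^{-\lambda_1\Delta_n})e^{-\lambda_1\Delta_n}}{\Delta_n}\to\tau^{-1}\lambda_1$. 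The same argument on the post-change block, for which $n-k_0\sim n(1-\tau)$, gives variance $(1-\tau)^{-1}\lambda_2$. Finally, since the two blocks $\{Y_i\}_{i\le k_0}$ and $\{Y_i\}_{i>k_0}$ are built from disjoint, independent increments, the two limits are independent and the asymptotic covariance is diagonal, whence $\Sigma=\mathrm{diag}(\tau^{-1}\lambda_1,(1-\tau)^{-1}\lambda_2)$. Combining this with the delta-method reduction of the first paragraph and the negligibility established in the second yields \eqref{estcon}.
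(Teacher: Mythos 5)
Your proposal is correct and follows the same overall strategy as the paper: reduce to the block means $\hat\gamma_m$, show that replacing the estimated index $\hat k_0$ by the true $k_0=[n\tau]$ costs only $o_p(1)$ after the $\sqrt{n\Delta_n}$ scaling, and then apply a central limit theorem for the fixed-index means. The differences are in how the two remaining steps are discharged. For the localisation step the paper writes the decomposition \eqref{conv} directly at the random index $\hat k$ and asserts the order from $\hat k-k_0=O_p(\Delta_n^{-1}\gamma_n^{-2})$ and $n\Delta_n\gamma_n^2\to\infty$; you instead take a supremum over the window $|k-k_0|\le M\Delta_n^{-1}\gamma_n^{-2}$ controlled by the Haj\'ek--Renyi inequalities, which is the more careful way to handle the fact that $\hat k_0$ is correlated with the $Y_i$, and your order estimates for the bias, fluctuation and reweighting terms are consistent with (indeed slightly sharper than the $(\sqrt n\gamma_n)^{-1}O_p(1)$ claimed in) the paper. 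For the final step the paper simply invokes Theorem 4.1 of Iacus and Yoshida (2007) to pass from $\hat\gamma_m(k_0)$ to the normal limit for $\hat\lambda_m$, whereas you rederive it: an explicit mean-value/delta-method argument showing $g'(\bar\gamma_m)=(1-\bar\gamma_m\Delta_n)^{-1}\to1$, the Lindeberg CLT as in the proof of Theorem \ref{H0} giving the marginal variances $\tau^{-1}\lambda_1$ and $(1-\tau)^{-1}\lambda_2$, and independence of the two blocks giving the diagonal $\Sigma$. Your version is therefore self-contained where the paper relies on an external citation, and you also correctly flag that the displayed limit must be read for the vector recentred at $(\lambda_1,\lambda_2)$.
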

\begin{proof}
We start noticing that
\begin{eqnarray}\label{conv}
&&\sqrt{n\Delta_n}(\hat\gamma_1(\hat k)-\hat\gamma_1(k_0))\\
&&=\sqrt{n\Delta_n}\left(\frac{1}{\hat k}\sum_{i=1}^{\hat k}Y_i-\frac{1}{ k_0}\sum_{i=1}^{k_0}Y_i\right)\notag\\
&& =\mathbf{1}_{\{\hat k\leq k_0\}}\left(\sqrt{n\Delta_n} \frac{k_0-\hat k}{k_0\hat k}\sum_{i=1}^{k_0}\left( Y_i-\gamma_1\right)-\sqrt{n\Delta_n}\frac{1}{\hat k}\sum_{i=\hat k}^{k_0}\left( Y_i-\gamma_1\right)\right)\notag\\
&&\quad+ \mathbf{1}_{\{\hat k >k_0\}}\Biggl(\sqrt{n\Delta_n}
\frac{k_0-\hat k}{k_0\hat k}\sum_{i=1}^{k_0}\left(
Y_i-\gamma_1\right)+\sqrt{n\Delta_n}\frac{1}{\hat k}\sum_{i=
k_0}^{\hat k}\left( Y_i-\gamma_2\right)\notag\\
&& \quad+\sqrt{n\Delta_n}
\gamma_n\frac{\hat k-k_0}{\hat k}\Biggr).\notag
\end{eqnarray}
Since $k_0=[\tau n]$, $\hat k=k_0+O_p(\Delta_n^{-1}\gamma_n^{-2})$,
and $n\Delta_n\gamma_n^2 \to \infty$, we have that \eqref{conv}
is $(\sqrt{n}\gamma_n)^{-1}O_p(1)$, which converges to zero in
probability. Then $\hat\lambda_1(\hat k)=-\frac{1}{\Delta_n}\log(1-\hat\gamma_1(\hat k)\Delta_n)$ and $\hat\lambda_1( k_0)=-\frac{1}{\Delta_n}\log(1-\hat\gamma_1(k_0)\Delta_n)$
have the same limiting distribution. Obviously the same result holds for $\hat\lambda_2$. By Theorem 4.1 in Iacus and Yoshida (2007), the convergence result \eqref{estcon} follows.

\end{proof}
\section{Application to real data}\label{appl}
In this section we consider an application of our model to two well known real data sets.
The first data set is about the Dow-Jones industrial average and the second one is the IBM stock prices.
In both cases, the data mesh $\Delta_n$ is not close to zero, hence the asymptotics of our set up does not hold. Nevertheless, our findings seems to confirm the results of previous analyses.

\subsection{Dow-Jones data}
This data set contains the
weekly closings of the Dow-Jones industrial average in the period July 1971 - Aug 1974.
These data have been proposed by Hsu (1977, 1979) and used by many other authors to test change point estimators. There are 162 data and the main evidence found by several authors is that a change in the variance occurred at point 89th which corresponds to the third week of March 1973. 
Instead of working on the values we transform the data into returns as usual $X(t_i) = (W(t_i)-W(t_{i-1}))/W(t_{i-1})$, $i=1, \ldots, n$ with $W$ the series of Dow-Jones closings and $X$ the returns. We assume that $X$ follows a telegraph process.
\begin{figure}[t]
\centering{\includegraphics{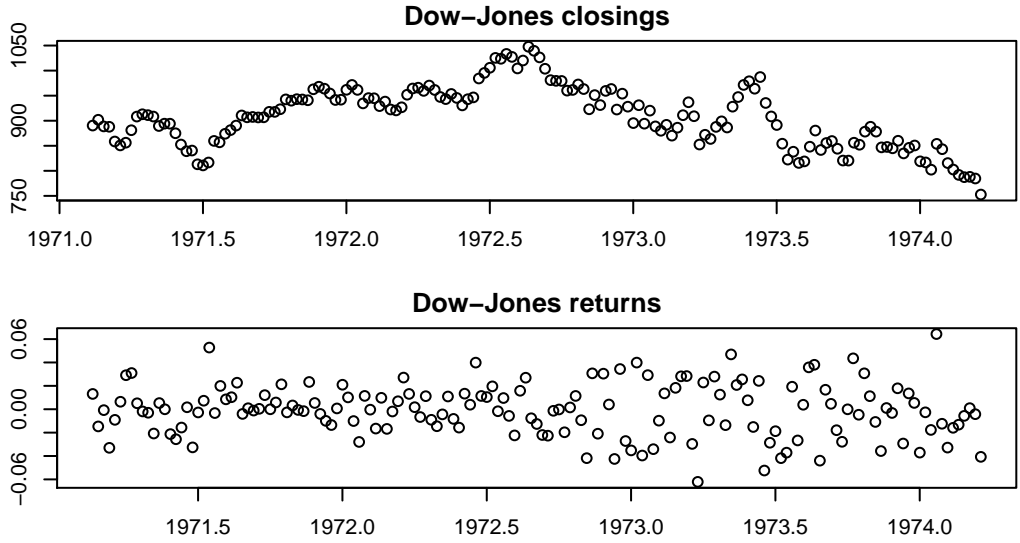}}
\caption{Weekly closings of the Dow-Jones industrial average July 1971 - Aug 1974 (up) and correspoind returns (down).}
\label{fig1}
\end{figure}
In this application, the data are not sampled at high frequency, i.e. $\Delta_n$ is not close to zero, hence we test our estimator of the change point even if the asymptotics is not realized.
Further, and for the same reason, we cannot assume as known the velocity $v$ of the process hence we first estimate $v$ by the average of the rescaled increments. i.e.
$$
\hat v_n = \frac{1}{n} \sum_{i=1}^n \frac{\eta_i}{\Delta_n}
$$
This is a consistent estimator of $v$, hence we construct the estimator of $\gamma$ as follows
$$
\hat\gamma = \frac{1}{n}\sum_{i=1}^n Y_i \qquad\text{where}\qquad Y_i =  \frac{\mathbf{1}_{\{|\eta_i|<\hat v_n \Delta_n\}}}{\Delta_n}
$$
With these quantities, we construct the statistics $D_k$ in \eqref{DK} and maximize it. The maximum is reached at $\hat k_0 = 89$ which confirms the evidence in Hsu (1974, 1979). Once we obtained the estimation of the change point, we re-estimate the velocity in both part of the series (before and after point 89th) and the two lambda's. We obtained respectively $v_1 = 0.61$, $\lambda_1 = 48.53$ and $v_2=1.24$ and $\lambda_2 = 34.61$ which confirms the intuition from the graphical inspection of the returns (i.e. in the first period there is a high number of switches but with low velocity which correspond to low variance of the returns; conversely for the second period).
Looking better at the first part of the series, we observe that variance is not stable, so we re-run the procedure and obtained a new change point $\hat k_1 = 27$ around august 1971. Figure \ref{fig2} contains the two change point estimates plotted against the Dow-Jones returns.
\begin{figure}[t]
\centering{\includegraphics{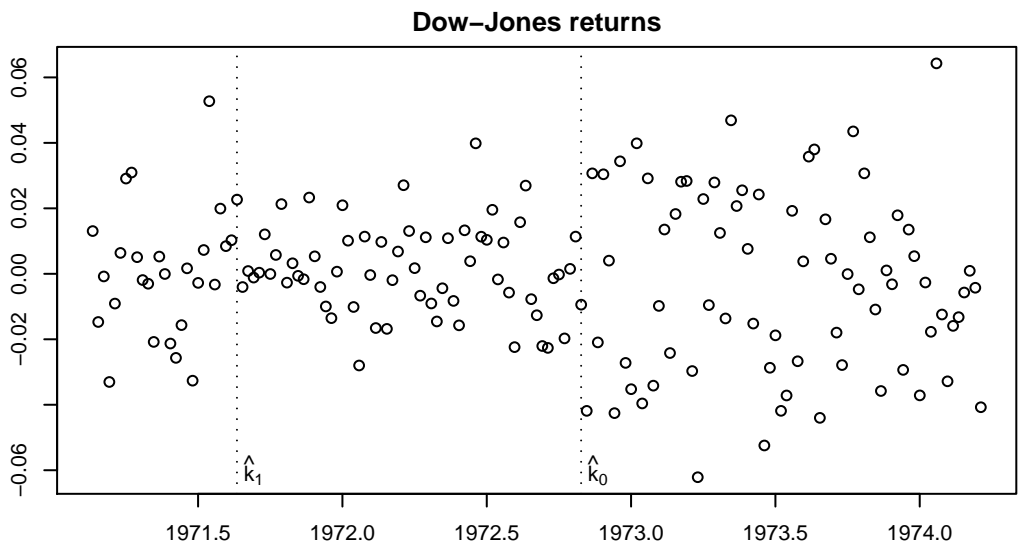}}
\caption{Change point estimates on the returns of the weekly closings of the Dow-Jones industrial average July 1971 - Aug 1974. Major change point estimate $\hat k_0=89$ which corresponds to the 3rd week of March 1973; second change point estimate $\hat k_1 = 27$, August 1971.}
\label{fig2}
\end{figure}
\subsection{IBM stock prices}
This data set contains 369 closing stock prices of the IBM as,set. They have been analyzed in Box and Jenkins (1970)  and further by  Wichern {\it et al.} (1976) in order to discover change points. Box and Jenkins (1970) fitted an ARIMA(0,1,1) on the first order difference and discover heteroschedasticity; Wichern {\it et al.} (1976) fitted an AR(1) model on the first differences of the logarithms. We consider instead the returns as in previous example and apply the same sequential procedure. Data are reported in Figure \ref{fig3} along with a couple of change points discovered by our estimates.
The first change point was found at point $\hat k_0 = 235$ which confirms the findings of Wichern {\it et al.} (1976). We further discovered another change point at time index $\hat k_1 = 18$ on the time series on the left to $\hat k_0$ and  a second change point on the right-hand series at time $\hat k_2 = 309$. 
\begin{figure}[t]
\centering{\includegraphics{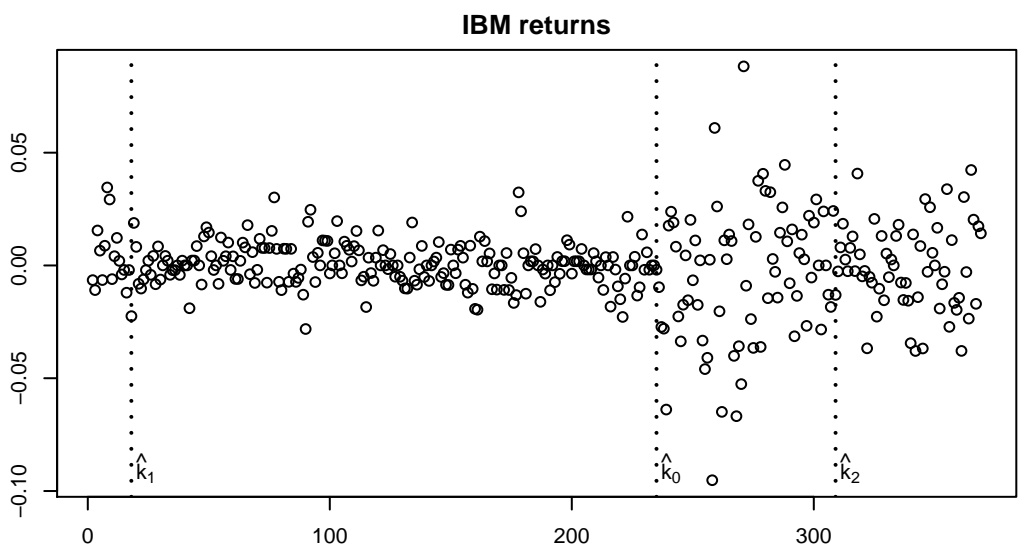}}
\caption{Return of the IBM stock closings (see e.g. Box and Jenkins, 1970). The major change point occurs at index $\hat k_0 = 235$, the other two at $\hat k_1 = 18$ and $\hat k_2=309$.}
\label{fig3}
\end{figure}

\end{document}